\date{}
\newtheorem{theorem}{\bf Theorem}[section]
\newtheorem{claim}[theorem]{\bf Claim}
\newtheorem{lemma}[theorem]{\bf Lemma}
\newtheorem{conjecture}[theorem]{\bf Conjecture}
\newcommand{\todo}[1]{}
\title{Random subgraphs of 
properly edge-coloured complete graphs and long rainbow cycles}
\author{Noga Alon\thanks{Sackler School of Mathematics 
and Blavatnik School of
Computer Science, Tel Aviv University, Tel Aviv 69978, Israel.
Email: {\tt nogaa@tau.ac.il}.  Research supported in part by a
USA-Israeli
BSF grant 2012/107, by an ISF grant 620/13 and
by the Israeli I-Core program.}
\and
Alexey Pokrovskiy\thanks{Department of Mathematics, ETH, 8092 Zurich, 
Switzerland. {\tt dr.alexey.pokrovskiy@gmail.com}. 
Research supported in part by SNSF grant 200021-149111.}
\and
Benny Sudakov
\thanks{Department of Mathematics, ETH, 8092 Zurich, Switzerland. 
{\tt benjamin.sudakov@math.ethz.ch}. 
Research supported in part by SNSF grant 200021-149111.}
}
\begin{document}
\maketitle

\begin{abstract}
A subgraph of an edge-coloured complete graph is called rainbow if all
its edges have different colours.  In 1980 Hahn conjectured that every
properly edge-coloured complete graph $K_n$ has a rainbow Hamiltonian
path.  Although this conjecture turned out to be false, it was widely
believed that such a colouring always contains a rainbow cycle of length
almost $n$.  In this paper, improving on several earlier results,  we
confirm this by proving that every properly edge-coloured $K_n$ has a
rainbow cycle of length $n-O(n^{3/4})$. One of the main ingredients of
our proof, which is of independent interest, shows that a random subgraph
of a properly edge-coloured $K_n$ formed by the edges of a random set of
colours has a similar edge distribution as a truly random graph with the
same edge density. In particular it has very good expansion properties.
\end{abstract}

\section{Introduction}
In this paper we study properly edge-coloured complete graphs, i.e.,
graphs in which edges which share a vertex have distinct colours. Properly
edge-coloured complete graphs are important objects because they
generalize $1$-factorizations of complete graphs. A $1$-factorization
of $K_{2n}$ is a proper edge-colouring of $K_{2n}$ with $2n-1$ colours,
or equivalently a decomposition of the edges of $K_{2n}$ into perfect
matchings. These factorizations were introduced by Kirkman more than 150
years ago and were extensively studied in the context of combinatorial
designs (see, e.g., \cite{MR, Wal} and the references therein.)

A \emph{rainbow} subgraph of a properly edge-coloured complete graph
is a  subgraph all of whose edges have different colours.  One reason
to study such subgraphs arises in Ramsey theory, more precisely in the
canonical version of Ramsey's theorem, proved by Erd\H{o}s and Rado. Here
the goal is to show that edge-colourings of $K_n$, in which each colour
appears only few times contain rainbow copies of certain graphs (see,
e.g., \cite{SV}, Introduction for more details). Another  motivation
comes from problems in design theory. For example a special case of
the Brualdi-Stein Conjecture about transversals in Latin squares is that
every $1$-factorization of $K_{2n}$ has a rainbow subgraph with $2n-1$ edges
and maximum degree $2$. A special kind of a graph with maximum degree $2$
is   a Hamiltonian path, that is,  a path which goes through every vertex of
$G$ exactly once. Since properly coloured complete graphs are believed
to contain large rainbow maximum degree $2$ subgraphs, it is natural
to ask whether they have rainbow Hamiltonian paths as well. This was
conjectured by Hahn~\cite{Hahn} in 1980. Specifically he
conjectured  that
for $n\geq 5$, in every colouring of $K_n$ with $\leq n/2$ edges of every colour there is a rainbow Hamiltonian path.


It turns out that this conjecture is false and for $n=2^k$, Maamoun
and Meyniel~\cite{Maamoun_Meyniel} found $1$-factorizations of $K_{n}$
without a rainbow Hamilton path. Nevertheless, it is widely believed
(see e.g., \cite {Gyarfas_Ruszinko_Sarkozy_Schelp}) that the intuition
behind Hahn's Conjecture is correct and that various slight weakenings
of this conjecture should be true.
Moreover, they should hold not only for $1$-factorizations but in general
for proper edge-colourings.  Hahn and Thomassen ~\cite{Hahn_Thomassen}
suggested that every properly edge-coloured $K_n$, with $<n/2$ edges of
each colour, has a rainbow Hamiltonian path. Akbari, Etesami, Mahini
and 
Mahmoody~\cite{Akbari_Etesami_Mahini_Mahmoody}  conjectured that every
$1$-factorization of $K_n$ contains a Hamiltonian cycle which has at
least $n-2$ different colours on its edges. They also asked whether every
$1$-factorization has a rainbow cycle of length at least $n-2$. Andersen
\cite{Andersen} further conjectured that every properly edge-coloured $K_n$
has a long  rainbow path.
\begin{conjecture}[Andersen, \cite{Andersen}]
Every properly edge-coloured $K_n$ has a rainbow path of length $n-2$.
\end{conjecture}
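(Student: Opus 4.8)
The plan is to deduce Andersen's Conjecture from the rainbow cycle of length $n-O(n^{3/4})$ established in this paper, together with an absorption argument that uses the expansion of random colour-subgraphs to run a rotation--extension scheme. The skeleton is the usual three-step absorption strategy, adapted to the rainbow setting, where one must remember that every colour class of a properly edge-coloured $K_n$ is a matching: (i) reserve a small random set $R$ of colours and a small random set $W$ of vertices and build, inside the subgraph $G_R$ spanned by the $R$-coloured edges, a single \emph{absorbing rainbow path} $A$; (ii) in the complementary colour-subgraph $G_0$, which by the random-subgraph theorem behaves like a dense random graph and in particular expands well, find a rainbow path $P \supseteq A$ covering every vertex outside a small ``leftover'' set; (iii) absorb the leftover vertices into $A$ to finish with a rainbow path missing only one vertex.

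For step (i), the point is that every vertex $v$ has many potential \emph{absorbers}: short rainbow segments on $R$-coloured edges into which $v$ can be rerouted while the path stays rainbow --- in the simplest form a slot $x$--$y$ on the path with both $c(xv)$ and $c(vy)$ colours used nowhere else, so that the edge $xy$ can be replaced by $x$--$v$--$y$. Since $G_R$ is quasirandom, each $v \in W$ has $\Omega(n)$ such absorbers, most of them on pairwise disjoint colour sets, and a random selection followed by a connecting argument (again powered by expansion) links them into one rainbow path $A$ with the robust property that any sufficiently small subset of $W$ can be absorbed simultaneously. For step (ii) one runs P\'osa rotations inside $G_0$: starting from a rainbow path ending at a vertex $v$ and containing $A$ as a sub-path, either some edge at $v$ has a colour not yet used, extending the path, or rotating the path produces many alternative endpoints, and the expansion supplied by the random-subgraph theorem forces enough new endpoints that extension eventually succeeds; iterating this covers all vertices outside a leftover set of size $|W| + O(n^{3/4})$.

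The main obstacle is carrying out step (iii) at the required precision. Ordinary absorption only removes $o(n)$ leftover vertices, and the rotation--extension argument in step (ii) inherently leaves $\omega(1)$ vertices uncovered, whereas Andersen's bound demands that \emph{every} vertex but one lie on the path. Closing this last gap seems to require either an absorber robust enough to swallow a near-linear leftover set, or an iterated ``boosting'' in which alternating rounds of absorption and re-rotation drive the leftover set down to constant size; in either case the difficulty is that all the leftover vertices must be inserted without ever repeating a colour, and since each colour is a matching the colour conflicts among the various insertions, and between them and the long path, have to be resolved globally and simultaneously rather than one at a time. This global colour-bookkeeping, not any single local move, is where the argument becomes genuinely hard, and it is precisely the difference between the asymptotic statement proved here and Andersen's exact conjecture.
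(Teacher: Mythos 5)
The statement you are trying to prove is not a theorem of this paper at all: it is Andersen's Conjecture, which the paper records as an open problem and only \emph{approximates}. The main result here is a rainbow cycle of length $n-24n^{3/4}$, and the authors explicitly leave the gap between the error terms ``$-1$'' and ``$-O(n^{3/4})$'' open, remarking moreover that even a rainbow maximum-degree-$2$ subgraph with $n-o(\sqrt n)$ edges is currently out of reach. So there is no proof in the paper to which your argument could be compared, and any purported proof of the exact statement would be a new result going well beyond the paper.

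Your proposal, by your own admission in its final paragraph, does not close the argument: step (iii) is exactly the missing ingredient, and naming it does not supply it. Concretely, the rotation--extension phase in step (ii) leaves a leftover set of size $|W|+O(n^{3/4})$ (polynomially many vertices), while standard absorbing structures --- including the single-vertex slot absorbers you describe --- are only known to swallow $o(n)$, and in the rainbow setting even that is delicate because each insertion $x$--$v$--$y$ consumes two fresh colours and all insertions plus the host path must avoid colour repetition \emph{simultaneously}; since every colour class is a perfect or near-perfect matching, the conflict graph among candidate absorbers is dense and no local argument resolves it. There is also no mechanism in your sketch that gets the uncovered set down to exactly one vertex rather than merely $o(n)$ or $O(n^{3/4})$, which is the entire content of the conjecture relative to what is already known. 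As it stands, your text is a plausible research programme (and broadly consistent in spirit with the tools of this paper --- random colour subgraphs, expansion, rotations), but it is not a proof, and the statement remains open.
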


There have been many positive results supporting the above conjectures.
By a trivial greedy argument every  properly coloured $K_n$ has a rainbow
path of length $\geq n/2-1$. Indeed if the maximum rainbow path $P$ in
such a complete graph has length less than $n/2-1$, then an endpoint of
$P$ must have an edge going to $V(K_n)\setminus V(P)$ in a colour which
is not present in $P$ (contradicting the maximality of $P$.)  Akbari,
Etesami, Mahini and  Mahmoody~\cite{Akbari_Etesami_Mahini_Mahmoody} showed
that every properly coloured $K_n$ has a rainbow cycle of length $\geq
n/2-1$.  Gy\'arf\'as and Mhalla~\cite{Gyarfas_Mhalla} showed that every
$1$-factorization of $K_n$ has a rainbow path of length $\geq (2n+1)/3$.
Gy\'arf\'as, Ruszink\'o, S\'ark\"ozy,
and Schelp~\cite{Gyarfas_Ruszinko_Sarkozy_Schelp} showed that
every properly coloured $K_n$ has a rainbow cycle of length $\geq
(4/7-o(1))n$. Gebauer and Mousset~\cite{Gebauer_Mousset}, and
independently Chen and Li~\cite{Chen_Li} showed that every properly
coloured $K_n$ has a rainbow path of length $\geq (3/4-o(1))n$. But
despite all these results Gy\'arf\'as and Mhalla~\cite{Gyarfas_Mhalla}
remarked that ``presently finding a rainbow path even with $n - o(n)$
vertices is out of reach.''

In this paper we improve on all the above mentioned results by showing that every properly edge-coloured $K_n$ has an almost spanning rainbow cycle.
\begin{theorem}\label{TheoremRainbowCycle}
For all sufficiently large $n$, every properly edge-coloured $K_n$ contains a rainbow cycle of length at least $n-24n^{3/4}$.
\end{theorem}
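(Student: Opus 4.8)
The plan is to build the cycle in two stages. First, using \emph{most} of the colours, cover all but $O(n^{3/4})$ vertices by a small family of vertex-disjoint rainbow paths that in addition use pairwise disjoint sets of colours; then splice these paths into a single rainbow cycle by adding one edge between consecutive endpoints, where the splicing edges come from a pool of colours reserved in advance. Concretely, the colour classes of a proper colouring of $K_n$ are matchings and there are at least $n-1$ of them; put each colour independently into a reservoir $R$ with probability $p=\Theta(n^{-1/4})$, so $|R|=\Theta(n^{3/4})$ and the spanning subgraph $G_R$ of $K_n$ on the $R$-coloured edges has all degrees $(1+o(1))pn$. By the random-subgraph result quoted in the abstract, with positive probability $G_R$ behaves like $G(n,p)$: every two disjoint vertex sets of size $\ge n^{3/4}$ span an $R$-edge, every vertex has $(1+o(1))|S|p$ neighbours in every set $S$ with $|S|\ge n^{3/4}$, and all of this survives the deletion of the edges of any $o(|R|)$ colours (at each vertex only one $R$-edge is lost per deleted colour, by properness). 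The colours outside $R$ still induce a properly coloured graph of minimum degree $(1-o(1))n$. Fix such an $R$.

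\textbf{Stage 1: almost-spanning disjoint rainbow paths.} Working only with the $\ge n-1-|R|$ colours outside $R$, I want vertex-disjoint rainbow paths $P_1,\dots,P_k$ with pairwise disjoint colour sets covering all but $O(n^{3/4})$ vertices, with $k=O(n^{3/4})$. The naive approach --- repeatedly extract a rainbow path of length $(3/4-o(1))\cdot(\text{number of remaining vertices})$ via the Gebauer--Mousset/Chen--Li theorem, delete its vertices and colours, recurse --- is too wasteful: a deleted colour is only a matching in $K_n$, yet it can be incident to almost every edge of the much smaller complete graph left on the uncovered vertices, so after a couple of rounds the residual graph need not be anything close to complete. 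Instead one first splits the non-reservoir colours at random into $O(\log n)$ blocks, finds one long rainbow path inside each block's graph, and uses the expansion of the block-graphs (again via the random-subgraph lemma) both to make the chosen paths vertex-disjoint and to absorb all but $O(n^{3/4})$ leftover vertices; the geometric decrease of the uncovered set across blocks yields the bounds on $k$ and on the leftover count. This is the technically heaviest part.

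\textbf{Stage 2: splicing.} Contract each $P_i$ to a super-vertex $v_i$; between $v_i$ and $v_j$ put one edge, coloured by its own colour, for each $R$-edge joining an endpoint of $P_i$ to an endpoint of $P_j$. Because $|R|$ exceeds $k$ by a growing factor and $G_R$ remains a robust expander after removing the $o(|R|)$ colours used so far, the resulting coloured multigraph on the $k$ super-vertices is a random-like expander in which a rainbow Hamilton cycle exists, found by a Pósa-type rotation/extension argument powered by the expansion lemma. Lifting this cycle through $P_1,\dots,P_k$ gives a cycle of $K_n$ of length at least $n-O(n^{3/4})$; it is automatically rainbow, since the reservoir colours are segregated from all Stage-1 colours (killing global clashes) and properness forbids only one colour at each endpoint of a splicing edge (killing local clashes). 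Optimising the constants hidden in $p$, in Stage 1, and in $|R|$ produces the bound $n-24n^{3/4}$.

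\textbf{Main obstacle.} The crux is Stage 1: producing a near-spanning union of \emph{few} vertex-disjoint rainbow paths with \emph{disjoint} colour sets and the right quantitative bounds, while keeping the colour-deletion losses under control; random colour-splitting together with the expansion lemma is what tames this. A second, pervasive difficulty is making the splicing robust --- ensuring the relevant degrees and connectivity survive after $O(n^{3/4})$ colours have been spent --- and a natural alternative to Stage 2 is a rotation/extension argument run directly on $G_R$ to lengthen a rainbow path and then close it; in every version the robust random-subgraph lemma is the engine, and the $n^{3/4}$ scaling is exactly the balance between the reservoir size and the number of leftover vertices, the latter necessarily positive since rainbow Hamilton cycles need not exist.
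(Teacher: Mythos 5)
Your high-level outline captures some of the right ingredients (a random reservoir of colours via the random-subgraph theorem, an almost-spanning rainbow path forest, then a join-up step), but the two stages you describe are both genuinely different from what the paper does, and both are left at the sketch stage precisely where the real work is.

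\textbf{Stage 1.} Your plan to get the near-spanning rainbow path forest by randomly splitting the non-reservoir colours into $O(\log n)$ blocks, extracting one long rainbow path per block, and patching things up by ``absorption'' is not what the paper does, and you yourself flag it as ``the technically heaviest part.'' The paper proves a self-contained deterministic lemma (Lemma~\ref{LemmaPathForest}, in the spirit of Andersen): in \emph{any} properly coloured graph on $n$ vertices with minimum degree $(1-\delta)n$, a maximum rainbow path forest with at most $\gamma n$ paths already has $\geq (1-4\delta)n$ edges, as long as $3\gamma\delta-\gamma^2/2 > n^{-1}$. The proof is a colour-chasing / edge-swapping argument off maximality; there is no random colour-splitting and no recursion over shrinking vertex sets. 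Applied to $G=K_n\setminus H$ (minimum degree $(1-5n^{-1/4})n$) with $\gamma=n^{-3/4}$, it yields a rainbow path forest with only $n^{1/4}$ paths covering all but $20n^{3/4}$ vertices. Note the paper needs \emph{very few} paths (about $n^{1/4}$), not $O(n^{3/4})$ as you suggest; the small path count is essential for the next step, so the bound you ask for in Stage~1 is in fact the wrong one to aim for.

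\textbf{Stage 2.} Your primary plan --- contract each path to a super-vertex and find a rainbow Hamilton cycle in the contracted multigraph on $k\approx n^{3/4}$ super-vertices --- has two concrete problems. First, the colour budget is $|R|=\Theta(n^{3/4})$, and a Hamilton cycle on $n^{3/4}$ super-vertices needs $n^{3/4}$ reservoir colours, so there is no slack at all; any loss to properness or to non-endpoint incidences kills the argument. Second, the contracted graph is not an ordinary graph: each super-vertex has exactly two usable ports (the two path endpoints) and the cycle must enter through one and leave through the other, so even Hamiltonicity (let alone rainbow Hamiltonicity) is a constrained problem that does not follow from ``random-like expander.'' The paper avoids all of this. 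Its rotation lemma (Lemma~\ref{LemmaRotation}) extends the single path $P_1$ by grafting in a long piece of some other path via at most two $H$-edges per step; after $O(\sqrt n)$ steps, spending only $O(\sqrt n)$ colours (far below the $\Theta(n^{3/4})$ reservoir), $P_1$ covers almost the entire forest, and one final $H$-edge closes it into a cycle. You gesture at exactly this --- ``a natural alternative to Stage 2 is a rotation/extension argument'' --- but treat it as secondary; it is in fact the paper's main engine and is substantially cleaner than the contraction route.

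In short, the random-reservoir + expansion idea is on the mark, but Stage~1 needs the Andersen-style maximality lemma rather than block-splitting, and Stage~2 should be the rotation/extension argument on the path forest rather than contraction to super-vertices; as written, both stages contain unresolved gaps at the points that carry the proof.
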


\noindent

This theorem gives an approximate version of Hahn's and Andersen's
conjectures, leaving as an open problem to pin down the correct order of
the error term (currently between $-1$ and $-O(n^{3/4})$). The constant
in front of $n^{3/4}$ can be further improved and we make no attempt to
optimize it.

The proof of our main theorem is based on the following result, which has
an independent interest. For a graph $G$ and two sets $A,B\subseteq V(G)$,
we use $e_G(A,B)$ to denote the number of edges of $G$ with one vertex
in $A$ and one vertex in $B$. We show that the subgraph of a properly
edge-coloured $K_n$ formed by the edges in a random set of colours has
a similar edge distribution as a truly random graph with the same edge
density. Here we assume that $n$ is sufficiently large and write $f \gg g$
if $f/g$ tends to infinity with $n$.

\begin{theorem}\label{TheoremRandom}
Given a proper edge-colouring of $K_n$, let $G$ be the subgraph obtained by
choosing every colour class randomly and independently with probability
$p\leq 1/2$. Then, with high probability, all vertices in $G$ have degree
$(1-o(1))np$ and for every two disjoint subsets $A, B$ with $|A|,|B|\gg
(\log n/p)^2$, $e_{G}(A,B) \geq (1-o(1))p |A||B|$.
\end{theorem}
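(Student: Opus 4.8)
The plan is to establish the two assertions separately: the degree bound is routine, while the edge bound is the substantial part.

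\emph{Degrees.} Fix a vertex $v$. Its $n-1$ incident edges all receive distinct colours, so $\deg_G(v)=\sum_{c\ni v}X_c$, where $X_c$ is the indicator that colour $c$ is chosen; these are independent $\mathrm{Ber}(p)$ variables, hence $\deg_G(v)\sim\mathrm{Bin}(n-1,p)$. Chernoff gives $\Pr[\,|\deg_G(v)-(n-1)p|>\varepsilon(n-1)p\,]\le 2e^{-\varepsilon^2(n-1)p/3}$. The hypothesis forces $(\log n/p)^2\ll n$, hence $np\gg\log n$, so one may choose $\varepsilon=\varepsilon(n)\to 0$ with $\varepsilon^2np\gg\log n$, and a union bound over the $n$ vertices shows that whp every vertex has degree $(1\pm o(1))np$.

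\emph{Edge counts: setup and the light part.} Fix disjoint $A,B$ with, say, $a:=|A|\le b:=|B|$. For a colour $c$ let $m_c$ be the number of its edges between $A$ and $B$; since each colour class is a matching, $m_c\le a$ and $\sum_c m_c=ab$. Thus $e_G(A,B)=\sum_c m_cX_c$ is a sum of independent nonnegative variables with mean $pab$, and we want a lower-tail bound strong enough to survive a union bound over all $\le n^{a+b}$ pairs $(A,B)$. The obstruction to a naive Bernstein bound is that the weights $m_c$ can be as large as $a$, so toggling one colour may shift the sum by $a$. We therefore fix a threshold $K:=\lceil\log n/p\rceil$, call a colour \emph{light} if $m_c\le K$ and \emph{heavy} otherwise, and split $e_G(A,B)=E_L+E_H$ accordingly. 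The variable $E_L$ has increments $\le K$ and variance at most $pK\sum_{m_c\le K}m_c$, so Bennett's inequality yields $\Pr[E_L<(1-\delta)\mathbb E E_L]\le\exp(-\Omega(\delta^2\,\mathbb E E_L/K))$. Since $a\gg(\log n/p)^2$, for a suitable $\delta=\delta(a)\to 0$ this beats $(a+b)\log n\le 2b\log n$ whenever $E_L$ carries at least, say, a $\delta$-fraction of the $ab$ crossing edges; a union bound then gives that whp $E_L\ge(1-\delta)p\sum_{m_c\le K}m_c$ for all such $A,B$. This is precisely the step that pins down the threshold in the statement: $K$-boundedness of the light increments requires roughly $a\gg K\log n/(\delta^2p)=(\log n/p)^2/\delta^2$, and the hypothesis is what allows $\delta\to 0$ while keeping this below $a$.

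\emph{The heavy part — the main obstacle — and conclusion.} If the heavy colours carry few edges, $\sum_{m_c>K}m_c\le\delta ab$, we discard $E_H$ entirely: $e_G(A,B)\ge E_L\ge(1-\delta)p\sum_{m_c\le K}m_c\ge(1-2\delta)pab$, and we are done. The hard case is the \emph{heavy-rich} pairs, where $\sum_{m_c>K}m_c>\delta ab$; there a single heavy colour can move $E_H$ by up to $a$, so concentration is far too weak to beat a union bound over all pairs. The crux of the argument is that heavy-rich pairs are rare: a colour with $m_c=t>K$ edges between $A$ and $B$ is a matching of size $t$ there, singling out $t$ vertices of $A$ and $t$ of $B$, so a heavy-rich pair carries large monochromatic matchings between $A$ and $B$ totalling $\Omega(\delta ab)$ crossing edges, and these already pin down many vertices of $A$ and of $B$. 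One encodes such a pair by first listing its heavy colours together with (parts of) their monochromatic matchings — which determines many vertices — and only then choosing the remaining vertices; the resulting count of heavy-rich pairs falls short of $n^{a+b}$ by a factor which, balanced against a dyadic decomposition of the heavy colours by size (Bennett's inequality applied scale by scale to each $\sum_{2^j<m_c\le 2^{j+1}}m_cX_c$), suffices to carry the union bound. Combining the light and heavy estimates gives $e_G(A,B)\ge(1-o(1))p|A||B|$ simultaneously for all disjoint $A,B$ with $|A|,|B|\gg(\log n/p)^2$, the $o(1)$ tending to $0$ as $\min(|A|,|B|)/(\log n/p)^2\to\infty$. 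I expect the heavy-rich counting — getting the entropy gained from the pinned vertices to outweigh the weak concentration of these lumpy sums, in particular when $a$ and $b$ are comparable and many colours realise near-perfect matchings between $A$ and $B$ — to be the genuinely delicate point.
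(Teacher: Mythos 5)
Your degree bound and your analysis of the ``light'' part are correct, and the threshold you derive from the Bennett/Bernstein union bound over light colours matches the $(\log n/p)^2$ in the statement for exactly the reason you give. But the heavy-rich case is not a technicality you can wave at: it is the whole problem, and your sketch leaves it unresolved. To see how bad it can get, consider a proper colouring in which $K_{A,B}$ decomposes into $|A|$ monochromatic perfect matchings (this happens, e.g., for affine subspaces in the XOR colouring of $K_{2^k}$, and more generally wherever the colouring has Latin-square substructure). Then $e_G(A,B)=|A|\cdot\mathrm{Bin}(|A|,p)$, which is $0$ with probability $(1-p)^{|A|}$ and has no useful dyadic decomposition: all colours sit at the single scale $m_c\approx|A|$, and Bennett at that scale gives a tail of $e^{-\Theta(\epsilon^2 p|A|)}$, far too weak to survive a na\"ive $n^{|A|+|B|}$ union bound. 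Everything hinges on showing the number of such pairs is much smaller than $n^{|A|+|B|}$. Your proposed encoding (``list the heavy colours with their sub-matchings, then fill in the rest'') is the right intuition, but the savings do not simply add across heavy colours, since near-perfect monochromatic matchings of $K_{A,B}$ all pin down essentially the same vertices. Making the entropy vs.\ concentration trade balance across all scales, all colourings, and all $|A|,|B|$ down to the stated threshold is a real piece of work that you have not done, and it is not obvious it closes without a further idea.

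The paper sidesteps this entirely with a structural reduction rather than raw concentration. The key observation is that $e_G(A,B)$ is at least the number of \emph{distinct colours} appearing between $A$ and $B$ that get chosen; that count is a sum of unit-weight independent Bernoullis, so Chernoff plus a union bound handles it whenever the pair is ``nearly-rainbow'' (i.e.\ almost all of the $|A||B|$ crossing edges have distinct colours). This is Lemma~\ref{LemmaNearRainbowExpansion}, and it only needs $|A|,|B|\gg\log n/p$. The second ingredient, Lemma~\ref{LemmaManyNearRainbowPairs}, shows that any pair $A,B$ of size $x$ can be partitioned into blocks $A_i,B_j$ of size $y\approx\sqrt{x}$ so that all but an $o(1)$ fraction of the block-pairs are nearly-rainbow; the proof is a second-moment/inclusion--exclusion computation that uses only $|E_c|\le x$, i.e.\ properness. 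In your Latin-square example, a colour giving a perfect matching of $K_{A,B}$ contributes only $O(1)$ edges to a typical $A_i\times B_j$, so the heavy structure evaporates after the partition. Summing the nearly-rainbow blocks then gives $e_G(A,B)\ge(1-o(1))p x^2$. So the paper does not need to count bad pairs or control lumpy sums at all: the partitioning converts every pair into overwhelmingly nearly-rainbow sub-pairs, and the colour-count lower bound on those sub-pairs is automatically $1$-Lipschitz. That replacement --- edges by distinct colours, global pair by a random fine partition --- is the idea your proposal is missing.
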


\noindent
Our proof can be also used to show that this conclusion holds for 
not necessarily disjoint sets (where in this case edges with both
endpoints in $A \cap B$ are counted twice). 
For slightly larger sets $A, B$ of size at least
$\tilde{\Omega}( \log^2 n/p^2)$ (where here $\tilde{\Omega}(x)$
denotes, as usual, $x (\log x)^{O(1)}$) 
we can also obtain a
corresponding upper bound, showing that $e_{G}(A,B) \leq (1+o(1))p x^2$
(see remark in the next section).  Note that the edges in the random
subgraph $G$ are highly correlated. Specifically, the edges of the same
colour either all appear or all do not appear in $G$. Yet we show
that the edge distribution between all sufficiently large sets are not
affected much by this dependence.

\subsection*{Notation}
For two disjoint sets of vertices $A$ and $B$, we use $E(A,B)$ to denote
the set of edges between $A$ and $B$.  A \emph{path forest} $\mathcal
P=\{P_1, \dots, P_k\}$ is a collection of vertex-disjoint paths in
a graph.
For a path forest $\mathcal P$, let $V(\mathcal P)=V(P_1)\cup\dots\cup
V(P_k)$ denote the vertices of the path forest, and let $E(\mathcal
P)=E(P_1)\cup\dots\cup E(P_k)$ denote the edges of the path forest.  We'll
use additive notation for concatenating paths i.e. if $P=p_1p_2\dots p_i$
and $Q=q_1q_2\dots q_j$ are two vertex-disjoint paths and $p_iq_1$ is an
edge, then we let $P+Q$ denote the path $p_1p_2\dots p_i q_1q_2\dots q_j$.
For a graph $G$ and a vertex $v$, $d_G(v)$ denotes the number of edges in
$G$ containing $v$.  The minimum and maximum degrees of $G$ are denoted
by $\delta(G)$ and $\Delta(G)$ respectively.  For the sake of clarity,
we omit floor and ceiling signs where they are not important.

\section{Random subgraphs of properly coloured complete graphs}

The goal of this section is to prove that the edges from a random
collection of colours in a properly edge-coloured complete graph have
distribution similar to truly random graph of the same density. Our main
result here will be Theorem~\ref{TheoremRandom}. Throughout this section
we assume that the number of vertices $n$ is sufficiently large and all
error terms $o(1)$ tend to zero when $n$ tends to infinity. We say that
some probability event holds {\em almost surely} if its probability
is $1-o(1)$. Our approach here is similar to the one in \cite{AO},
section 3.3,
see also \cite{AKKR}, section 5.2, but requires several additional
ideas.
First we need to recall the well known Chernoff bound,
see e.g., \cite{AS}.

\begin{lemma}\label{Chernoff}
Let $X$ be the binomial random variable with parameters $(n,p)$. 
Then for $\varepsilon\in (0,1)$ we have
$$\mathbb P\big(|X-pn|> \varepsilon pn\big)\leq 
2e^{-\frac{pn\varepsilon^2}{3}}.$$
\end{lemma}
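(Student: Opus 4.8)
The plan is to use the standard exponential‑moment (Bernstein) argument. Write $X=\sum_{i=1}^{n}X_i$, where the $X_i$ are independent Bernoulli random variables with $\mathbb P(X_i=1)=p$, and bound the upper tail $\mathbb P\big(X>(1+\varepsilon)pn\big)$ and the lower tail $\mathbb P\big(X<(1-\varepsilon)pn\big)$ separately, combining them at the end by a union bound.

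For the upper tail, fix $t>0$. Applying Markov's inequality to the nonnegative random variable $e^{tX}$ and using independence,
$$\mathbb P\big(X\geq (1+\varepsilon)pn\big)\leq e^{-t(1+\varepsilon)pn}\,\mathbb E\big[e^{tX}\big]=e^{-t(1+\varepsilon)pn}\big(1-p+pe^{t}\big)^{n}\leq e^{-t(1+\varepsilon)pn}\,e^{pn(e^{t}-1)},$$
where the last step uses $1+x\leq e^{x}$ with $x=p(e^{t}-1)$. Choosing $t=\ln(1+\varepsilon)>0$ makes the exponent equal to $pn\big(\varepsilon-(1+\varepsilon)\ln(1+\varepsilon)\big)$, so it remains to verify the elementary inequality $(1+\varepsilon)\ln(1+\varepsilon)-\varepsilon\geq \varepsilon^{2}/3$ for all $\varepsilon\in(0,1)$. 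I would do this by setting $h(\varepsilon)=(1+\varepsilon)\ln(1+\varepsilon)-\varepsilon-\varepsilon^{2}/3$ and noting that $h(0)=0$, that $h'(\varepsilon)=\ln(1+\varepsilon)-2\varepsilon/3$ with $h'(0)=0$, and that $h''(\varepsilon)=1/(1+\varepsilon)-2/3$ is positive on $[0,1/2)$ and negative on $(1/2,1]$; hence $h'$ first rises from $0$ and then falls, but since $h'(1)=\ln 2-2/3>0$ it stays nonnegative on all of $[0,1]$, so $h$ is nondecreasing and $h\geq 0$ there. This yields $\mathbb P\big(X\geq (1+\varepsilon)pn\big)\leq e^{-pn\varepsilon^{2}/3}$.

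For the lower tail the argument is symmetric: for $t>0$,
$$\mathbb P\big(X\leq (1-\varepsilon)pn\big)\leq e^{t(1-\varepsilon)pn}\,\mathbb E\big[e^{-tX}\big]\leq e^{t(1-\varepsilon)pn}\,e^{pn(e^{-t}-1)},$$
and taking $t=-\ln(1-\varepsilon)>0$ reduces matters to $(1-\varepsilon)\ln(1-\varepsilon)+\varepsilon\geq \varepsilon^{2}/2\geq \varepsilon^{2}/3$, which follows from a one‑variable computation analogous to the above (the function $k(\varepsilon)=(1-\varepsilon)\ln(1-\varepsilon)+\varepsilon-\varepsilon^{2}/2$ satisfies $k(0)=k'(0)=0$ and $k''(\varepsilon)=\varepsilon/(1-\varepsilon)\geq 0$). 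Hence $\mathbb P\big(X\leq (1-\varepsilon)pn\big)\leq e^{-pn\varepsilon^{2}/2}\leq e^{-pn\varepsilon^{2}/3}$, and adding the two tail estimates gives $\mathbb P\big(|X-pn|>\varepsilon pn\big)\leq 2e^{-pn\varepsilon^{2}/3}$, as claimed. No step here is a genuine obstacle; the only point needing care is the numerical constant $3$, which is precisely what is required to absorb the slightly weaker upper‑tail bound (and, via the inequalities above, the lower‑tail bound) into a single clean expression.
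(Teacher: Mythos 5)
Your proof is correct: the exponential-moment (Chernoff) argument, the choice $t=\ln(1+\varepsilon)$ resp.\ $t=-\ln(1-\varepsilon)$, and the two elementary inequalities $(1+\varepsilon)\ln(1+\varepsilon)-\varepsilon\geq\varepsilon^2/3$ and $(1-\varepsilon)\ln(1-\varepsilon)+\varepsilon\geq\varepsilon^2/2$ are all verified accurately, and the union bound gives exactly the stated estimate. The paper itself does not prove this lemma but quotes it as the standard Chernoff bound from \cite{AS}, and your argument is precisely the standard proof given there, so there is nothing further to compare.
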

Given a proper edge-colouring of $K_n$, we call a pair of disjoint subsets
$A, B$ \emph{nearly-rainbow} if the number of colours of edges between $A$
and $B$ is at least $(1-o(1))|A||B|$. The following lemma shows that we
can easily control the number of random edges inside nearly-rainbow pairs.
\begin{lemma}\label{LemmaNearRainbowExpansion}
Given a proper edge-colouring of $K_n$ , let $G$ be a subgraph of $K_n$
obtained by choosing every colour class with probability $p$.  Then,
almost surely, all nearly-rainbow pairs $A, B$ with  $|A|=|B|=y \gg \log
n/p$ satisfy $e_{G}(A,B) \geq (1-o(1))p y^2$
\end{lemma}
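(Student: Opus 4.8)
The plan is to fix a nearly-rainbow pair $(A,B)$ with $|A|=|B|=y$, group the $y^2$ edges of $E(A,B)$ by colour, and exploit independence across colours. Let $C$ be the set of colours appearing on $E(A,B)$ and, for $c\in C$, let $m_c$ be the number of edges of colour $c$ between $A$ and $B$, so $\sum_{c\in C}m_c=|A||B|=y^2$. Writing $\varepsilon=\varepsilon(n)\to 0$ for the function hidden in the definition of nearly-rainbow, we have $|C|\ge (1-\varepsilon)y^2$ and hence $\sum_{c\in C}(m_c-1)=y^2-|C|\le \varepsilon y^2$. Since $G$ keeps each colour class independently with probability $p$, the quantity $e_G(A,B)=\sum_{c\in C}m_c\,\mathbf{1}[c\text{ kept}]$ is a sum of \emph{independent} random variables, one per colour. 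This grouping is the point of the argument: it converts the troublesome ``a colour is all-or-nothing'' dependence into a sum of independent bounded terms.

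Next I would isolate the colours occurring exactly once. Let $N_1=\#\{c\in C:m_c=1\}$. From $\sum_{c\in C}(m_c-1)\le\varepsilon y^2$ one gets $\#\{c:m_c\ge 2\}\le \varepsilon y^2$ and $\sum_{c:m_c\ge 2}m_c\le 2\varepsilon y^2$, so $N_1\ge(1-2\varepsilon)y^2$. Dropping the nonnegative contribution of the colours with $m_c\ge 2$, we have $e_G(A,B)\ge X_1$ where $X_1\sim\mathrm{Bin}(N_1,p)$. Applying the Chernoff bound (Lemma~\ref{Chernoff}) with an error parameter $\delta=\delta(n)\to 0$ to be chosen, $\mathbb P\big(X_1<(1-\delta)pN_1\big)\le 2e^{-pN_1\delta^2/3}$, and off this event $e_G(A,B)\ge(1-\delta)(1-2\varepsilon)py^2=(1-o(1))py^2$, as desired. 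Thus for each pair the failure probability is at most $2e^{-\Omega(py^2\delta^2)}$.

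It remains to union bound over all pairs $(A,B)$ with $|A|=|B|=y$ and over all admissible $y$. For fixed $y$ there are at most $\binom{n}{y}^2\le e^{3y\log n}$ such pairs, so I need the exponent $py^2\delta^2$ to comfortably exceed $y\log n$, i.e. $py\delta^2\gg\log n$. Since we are given $y\gg\log n/p$, it suffices to prove the statement under the (equivalent) hypothesis $y\ge\omega(n)\log n/p$ for some fixed $\omega(n)\to\infty$ tending to infinity arbitrarily slowly; then take $\delta=\omega(n)^{-1/8}$, which is $o(1)$ and uniform in $y$ and gives $py\delta^2\ge \omega(n)^{3/4}\log n\gg\log n$. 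The per-pair bound $2e^{-\Omega(y\,\omega(n)^{3/4}\log n)}$ then beats $e^{3y\log n}$ for all large $n$, and summing over the pairs of each size $y$ and then over $y$ leaves a convergent geometric series of total mass $O(1/n)=o(1)$. The only delicate point — really the sole obstacle — is this final calibration: choosing $\delta$ small enough that $(1-\delta)(1-2\varepsilon)=1-o(1)$, yet large enough, and uniform in $y$, that $py\delta^2$ still dominates $\log n$ for every admissible $y$; beyond that the proof is a direct combination of the Chernoff bound with the combinatorial observation that near-rainbowness forces all but an $o(y^2)$ fraction of the relevant colour classes to be singletons.
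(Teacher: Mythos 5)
Your proof is correct and takes essentially the same approach as the paper. The paper simplifies slightly by lower-bounding $e_G(A,B)$ by the total number of distinct colours present in $E_G(A,B)$, which is exactly $\mathrm{Bin}(m,p)$ with $m\ge(1-\varepsilon/2)y^2$, whereas you restrict attention to the singleton colours to get $\mathrm{Bin}(N_1,p)$ with $N_1\ge(1-2\varepsilon)y^2$; the subsequent Chernoff estimate and the union-bound calibration over all pairs and all admissible $y$ are then the same.
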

\begin{proof}
Since $y\gg \log n/p$, we can choose $\varepsilon=o(1)$ so that every
nearly-rainbow pair $A, B$ has at least $(1-\varepsilon/2)|A||B|$ colours,
and $\varepsilon^2y\geq 30\log n/p$.  Let $(A,B)$ be a nearly-rainbow
pair with $|A|=|B|= y$. Then the number of different colours in  $G$
between $A$ and $B$ is binomially distributed with parameters $(m,
p)$, where $m\geq (1-\varepsilon/2)y^2$ is the number of colours in
between $A$ and $B$ in $K_n$.  Since $e_G(A,B)$ is always at least
the number of colours between $A$ and $B$,  Lemma \ref{Chernoff}
implies $$\mathbb{P}\big(e_{G}(A,B) \leq (1-\varepsilon)p y^2\big) \leq
e^{-\varepsilon^2py^2/13}.$$ Since $\varepsilon^{2}y\geq 30\log n/p$,
the result follows by taking a union bound over all ${n \choose y}^2$
pairs of sets $A, B$ of size $y$ and all $y \leq n/2$.
\end{proof}
Our next lemma shows that we can partition any pair of sets $A, B$
into few parts such that almost all pairs of parts are nearly-rainbow.
\begin{lemma}\label{LemmaManyNearRainbowPairs}
Let $A, B$ be two subsets of size $x$ of properly edge-coloured $K_n$
and let $y$ satisfy $x\gg y^2$. Then there are partitions of $A$ and $B$
into sets $\{A_i\}$ and $\{B_j\}$ of size $y$ such that all but an $o(1)$
fraction of pairs $A_i, B_j$ are nearly-rainbow.
\end{lemma}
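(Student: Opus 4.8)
The plan is to choose the partitions of $A$ and $B$ uniformly at random among all partitions into parts of size $y$, and to show that the expected number of pairs $(A_i,B_j)$ that fail to be nearly-rainbow is $o(1) \cdot (x/y)^2$; then some partition achieves this. So fix a random partition $A = A_1 \cup \dots \cup A_{x/y}$ and $B = B_1 \cup \dots \cup B_{x/y}$. It suffices to show that for a fixed pair of indices $i,j$, the probability that $(A_i, B_j)$ is not nearly-rainbow is $o(1)$; equivalently, writing $A_i = \{a_1,\dots,a_y\}$ and $B_j = \{b_1,\dots,b_y\}$ as uniformly random $y$-subsets of $A$ and $B$ (they are almost independent, and one may even first expose $A_i$ and condition), I must show that with probability $1-o(1)$ the $y^2$ edges between $A_i$ and $B_j$ use $(1-o(1))y^2$ distinct colours.

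The key point is that, by properness, each colour appears at most once at every vertex, so each colour can appear on at most $\min(|A_i|,|B_j|) = y$ of the edges between $A_i$ and $B_j$; more usefully, a repeated colour between $A_i$ and $B_j$ corresponds to two edges $a_k b_\ell$ and $a_{k'} b_{\ell'}$ of the same colour with $k \ne k'$ and $\ell \ne \ell'$. The number of colours on $E(A_i,B_j)$ is at least $y^2$ minus the number of such "collision" pairs, so it is enough to bound the expected number of monochromatic pairs of independent edges inside $A_i \times B_j$. Now fix any colour $c$: in $K_n$ the colour class $c$ is a matching, so it contributes some set $M_c$ of vertex-disjoint edges; the number of edges of $M_c$ landing between $A$ and $B$ is at most $x$, hence the number of edges of colour $c$ between $A_i$ and $B_j$ is stochastically dominated by a hypergeometric-type count — each such edge survives into $A_i \times B_j$ with probability roughly $(y/x)^2$. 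Summing over all colours and using that the total number of edges between $A$ and $B$ is $x^2$, the expected number of monochromatic collision pairs in $A_i \times B_j$ is $O\big( x \cdot (y/x)^2 \cdot y^2\big) = O(y^4/x) = o(y^2)$, where the last step uses exactly the hypothesis $x \gg y^2$. (More carefully: group the at most $x^2$ edges between $A$ and $B$ by colour; for a colour with $t_c$ edges between $A$ and $B$, the expected number of collision pairs surviving is $O(t_c^2 (y/x)^4)$, and since each $t_c \le x$ — indeed $t_c \le \min(x, \text{matching size})$ — and $\sum_c t_c = x^2$, we get $\sum_c t_c^2 (y/x)^4 \le x \cdot x^2 \cdot (y/x)^4 = y^4/x$.) Thus $\mathbb{E}[\#\text{collisions in } A_i\times B_j] = o(y^2)$, and by Markov a uniformly random pair is nearly-rainbow with probability $1-o(1)$.

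To finish: let $N$ be the number of pairs $(i,j)$ for which $(A_i,B_j)$ fails to be nearly-rainbow. By linearity of expectation over the $(x/y)^2$ index pairs, $\mathbb{E}[N] = o(1)\cdot (x/y)^2$, so there is a choice of the two partitions with $N = o((x/y)^2)$, which is exactly the claimed $o(1)$ fraction of pairs. The main obstacle I anticipate is the dependence between the parts of a single partition (the $A_i$ are not independent of each other, and $A_i$ is not a simple independent random subset) and making the "surviving edge" probability estimate rigorous: the cleanest route is to condition on $A_i$, then note $B_j$ is a uniformly random $y$-subset of $B$, bound $\mathbb{E}[\text{collisions} \mid A_i]$ using only the randomness of $B_j$, and observe that this bound is uniform over $A_i$, so no delicate correlation analysis between the two partitions is needed. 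One should also double-check the regime $y^2 \le x \le n$ does not force $y$ to be too small for "$o(1)$" to be meaningful — but since the statement only claims an $o(1)$ fraction of bad pairs and $y^4/x = o(y^2) \iff y^2 = o(x)$, the hypothesis $x \gg y^2$ is precisely what is needed.
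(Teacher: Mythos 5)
Your proposal is correct and takes essentially the same route as the paper's proof: both treat a fixed index pair $(A_i,B_j)$ as a pair of independent uniformly random $y$-subsets of $A$ and $B$, bound the expected number of colour repetitions using $|E_c|\le x$ from properness (your ``collision pair'' count is exactly the second-order term in the paper's inclusion--exclusion estimate for $\mathbb{P}(c\text{ present in }E(S,T))$), then apply Markov and finish by linearity of expectation over a random partition. The only cosmetic difference is that the paper computes $\mathbb{E}[Z]$ (expected number of distinct colours) directly, whereas you bound $\mathbb{E}[y^2-Z]$ via repeated-colour pairs; both give $y^4/x = o(y^2)$.
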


\begin{proof}

Since $x\gg y^2$ for $\varepsilon=o(1)$ we can assume that  $x\geq
\varepsilon^{-2}y^2$ and $x$ is divisible by $y$. To prove the lemma, we
will show that there are partitions of $A$ and $B$ into sets $\{A_i\}$
and $\{B_j\}$ of size $y$ such that all but an $\varepsilon$ fraction
of pairs $A_i, B_j$ are nearly-rainbow.

Consider pair of random subsets $S \subset A$ and $T \subset B$ of
size $y$ chosen uniformly at random from all such subsets. For every
colour $c$, let $E_c$ be the set of edges of colour $c$ between $A$ and
$B$. Given any two vertices $a,a'\in A$ notice that $\mathbb{P}(a\in
S)=y/x$ and $\mathbb{P}(a,a'\in S)=\frac{y(y-1)}{x(x-1)}$. The
same estimates hold for vertices  in $T\subseteq B$. This implies
that for two disjoint edges $ab$ and $a'b'$ between $A$ and $B$ have
$\mathbb{P}(ab\in E(S,T))=y^2/x^2$ and $\mathbb{P}(ab, a'b'\in E(S,T))=
\frac{y^2(y-1)^2}{x^2(x-1)^2}$. Also note that $|E_c|\leq x$, since the
edge-colouring on $K_n$ is proper.  Thus, by the inclusion-exclusion
formula we can bound the probability that a colour $c$ is present in
$E(S,T)$ as follows
\begin{eqnarray*}
\mathbb{P}(\text{$c$ present in $E(S,T)$}) &\geq& \sum_{e\in E_c}
\mathbb{P}(e\in E(S,T))-\sum_{e,f\in E_c}\mathbb{P}(e,f\in E(S,T))
\geq \frac{y^2}{x^2}|E_c|-\frac{y^2(y-1)^2}{x^2(x-1)^2}\binom{|E_c|}2\\
&=& \frac{y^2}{x^2}|E_c|\Big(1-(y-1)^2/(x-1)\Big) \geq 
\frac{y^2}{x^2}|E_c|\Big(1-y^2/x\Big) \geq 
\frac{y^2}{x^2}|E_c|(1-\varepsilon^2).
\end{eqnarray*}
Let $Z$ be the number of colours in $E(S,T)$. Note that 
$\sum_c |E_c|=x^2$. Hence, by linearity of expectation, 
$\mathbb{E}(Z)\geq\sum_c(1-\varepsilon^2)\frac{y^2}{x^2}|E_c|
=(1-\varepsilon^2)y^2.$ 
Since $Z\leq e(S,T)=y^2$ we have that $y^2-Z$ is non-negative with
$\mathbb{E}(y^2-Z)\leq \varepsilon^2y^2$. Therefore, by Markov's
inequality we have $\mathbb{P}(y^2-Z\geq \varepsilon y^2)\leq
\varepsilon$. This implies that with probability at least $1-\varepsilon$
a pair $S, T$ is nearly-rainbow.

Let $\{A_i\}$ and $\{B_j\}$ be random partitions of $A$ and $B$ into
sets of size $y$. By the above discussion the expected fraction of
pairs which are not nearly-rainbow $A_i, B_j$ is at most $\varepsilon$.
Therefore there exists some partition satisfying the assertion of
the lemma.  \end{proof}

Combining the above two lemmas we can now complete the proof of 
Theorem~\ref{TheoremRandom}.

\begin{proof}[Proof of Theorem~\ref{TheoremRandom}]

Given a proper edge-colouring of $K_n$ , let $G$ be a subgraph of $K_n$
obtained by choosing every color class with probability $p$. Since all the
edges incident to some vertex have distinct colours the degrees of $G$ are
binomially distributed with parameters$(n-1, p)$. Moreover by the condition
$x \gg (\log n/p)^2$ we have that $pn \gg \log n$. Therefore for every
vertex $v$, by the Chernoff bound the probability that $|d_G(v)-np|\geq
\epsilon np$ is at most $e^{\frac{-pn\varepsilon^2}{4}}$.  By the union
bound, all the degrees are almost surely $(1-o(1))np$.

Fix some $x\gg  (\log n/p)^2$. Notice that for any pair of
disjoint sets $A, B$ with $|A|, |B|\gg x$, $E_{K_n}(A,B)$ contains
$(1-o(1))|A||B|/x^2$ edge-disjoint pairs $A_i\subseteq A, B_j\subseteq B$
with $|A_i|=|B_j|=x$. Using this, it is sufficient to prove the theorem
just for pairs of sets $A, B$ with $|A|, |B|=x$

Let $y$ be some integer satisfying $y \gg \log n/p$ and $y^2
\ll x$, which exists since $x \gg (\log n/p)^2$.  Then, by Lemma
\ref{LemmaNearRainbowExpansion}, we have that for every nearly-rainbow
pair $S, T$ of sets of size $y$ there are at least $(1-o(1))py^2$ edges of
$G$ between $S$ and $T$. Let $A$ and $B$ be two arbitrary subsets of $G$
of size $x$. By Lemma~\ref{LemmaManyNearRainbowPairs}, there are partitions
$\{A_i\}, \{B_j\}$  of $A$ and $B$ into subsets of size $y$ such that all
but $o(1)$ fraction of the pairs $A_i, B_j$ are nearly-rainbow in $K_n$.
Then, almost surely,
$$
e_G(A,B)\geq \sum_{\text{nearly-regular }A_i, B_j} 
e_G(A_i,B_j) \geq (1-o(1)) \frac{x^2}{y^2} \cdot (1-o(1))py^2
\geq (1-o(1))px^2,
$$
completing the proof. 
\end{proof}
\vspace{0.2cm}

\noindent
{\bf Remark.}\, If $x$ is a bit larger, being at least, say,
$\frac{\log^2 n}{p^2} (\log ( \frac{\log n}{p}))^{O(1)}$ 
then one can modify our proof to
also bound $e_G(A,B)$  from above by $(1+o(1))px^2$. Indeed in this
case we can split each of the two sets $A$ and $B$ of size $x$ 
to disjoint subsets of size,
say $y=\sqrt{x}/100$ and show that with positive probability no pair of 
these subsets spans more than $\log x$ edges of the same color. 
When we pick each color randomly and independently with probability $p$ 
the expected number of edges between any two such subsets with the
colors picked is exactly $py^2$. The number of edges between any such sets is also $c$-Lipschitz with $c=\log x$ since by the above discussion, changing the decision about one colour can change the quantity by at most $\log x$.
Therefore we can replace the Chernoff
bound by Azuma's Inequality (see~\cite{McD}) to conclude that with high probability 
the number will be $(1+o(1))py^2$ for each such pair. We omit the
details as the upper bound will not be needed in this paper.
\section{Rainbow path forest}
The following lemma is the second main ingredient which we will need to
prove Theorem~\ref{TheoremRainbowCycle}. It says that every properly
coloured graph with very high minimum degree has a nearly-spanning
rainbow path forest. The proof is a version of a technique of
Andersen~\cite{Andersen} who proved the same result for complete graphs.
\begin{lemma}\label{LemmaPathForest}
For all $\gamma, \delta, n$ with $\delta\geq \gamma$ and  $3\gamma\delta
-\gamma^2/2>n^{-1}$ the following holds.  Let $G$ be a properly coloured
graph with $|G|= n$ and $\delta(G)\geq(1-\delta)n$. Then $G$ contains a
rainbow path forest with $\leq \gamma n$ paths  and $|E(\mathcal P)|\geq
(1-4\delta)n$.
\end{lemma}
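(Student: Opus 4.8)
The plan is to build the rainbow path forest greedily by repeatedly enlarging a rainbow path forest one edge at a time, following Andersen's switching idea adapted to high-minimum-degree graphs rather than complete graphs. Start with the trivial path forest consisting of all $n$ vertices as isolated paths (using no colours), and at each step try to decrease the number of paths by joining two of them with an edge in a new colour, or — when that is blocked — perform a rotation/switch that keeps the number of edges the same but changes the endpoint structure so that a subsequent join becomes possible. Concretely, maintain a rainbow path forest $\mathcal P$ with a designated endpoint $v$ of one path $P$; since $d_G(v)\ge (1-\delta)n$ and at most $|E(\mathcal P)|$ colours are used, $v$ has at least $(1-\delta)n-|E(\mathcal P)|$ edges to vertices in new colours. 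If any such edge goes to an endpoint of another path, we join and reduce the number of paths by one. Otherwise all these edges go into the interiors of paths (or to the other endpoint of $P$), and each such edge $vw$ lets us switch: reversing the segment of a path after $w$ creates a new endpoint, and we argue by a counting/potential argument that some sequence of switches exposes an endpoint of a different path in an available colour.

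The key quantitative point is to track two parameters simultaneously: the number of paths $k=|\mathcal P|$ and the number of edges $e=|E(\mathcal P)|$, with $e=n-k$ always. We continue the process as long as $k>\gamma n$; we must show that while $k>\gamma n$ we can always either merge two paths or make progress toward a merge via switches, and that the switches cannot get stuck. The obstruction to merging at $v$ is that fewer than $1$ of $v$'s new-colour neighbours are path-endpoints; there are $\le 2k$ endpoints total, and $v$ has $\ge (1-\delta)n-e=(1-\delta)n-(n-k)=k-\delta n$ neighbours in new colours, so if $k-\delta n > 2k$ — i.e. if the number of new-colour neighbours exceeds the number of endpoints — we could merge immediately; this is too strong, so instead one shows that the set of reachable endpoints (over all switch sequences from $v$) grows, and the hypothesis $3\gamma\delta-\gamma^2/2>n^{-1}$ is exactly what guarantees the switching graph cannot close up on a set of size $<2k/?$ without producing an available merge. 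The upshot will be that the process only halts with $k\le\gamma n$, at which point $|E(\mathcal P)|=n-k\ge n-\gamma n$; but we have also possibly discarded some edges during switches when colour conflicts forced us to drop an edge, so a slightly more careful bookkeeping gives $|E(\mathcal P)|\ge(1-4\delta)n$ rather than $(1-\gamma)n$ — the $4\delta$ loss coming from the edges sacrificed to resolve colour repetitions during rotations.

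The main obstacle I expect is controlling the switching/rotation step in a graph that is not complete: Andersen's original argument for $K_n$ uses that any two endpoints are adjacent, which fails here, so one must show that enough switches are available using only the $\delta$-fraction degree deficit — i.e. that the "rotation endpoint" set is large enough that a merge edge into it must exist despite up to $\delta n$ missing neighbours. This is precisely where the peculiar inequality $3\gamma\delta-\gamma^2/2>n^{-1}$ should drop out of the calculation: it is the condition under which the expected/counted number of available $(\text{endpoint}, \text{new colour})$ pairs reachable from $v$ stays positive throughout. I would set up a potential function counting (reachable endpoints) $\times$ (available colours) and show it strictly increases under switches until a merge is forced, then translate the stopping condition into the stated bound on $|E(\mathcal P)|$.
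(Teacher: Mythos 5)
Your proposal correctly identifies the high-level strategy---Andersen-style rotations/switches adapted to a graph with high minimum degree rather than a complete graph---but the hard part of the proof is precisely the part you leave unproved. You write ``one shows that the set of reachable endpoints... grows'' and ``I would set up a potential function counting (reachable endpoints) $\times$ (available colours) and show it strictly increases,'' but you do not exhibit the switching sequence nor verify that it terminates or produces a contradiction; you yourself flag this (``The main obstacle I expect is controlling the switching/rotation step in a graph that is not complete''). What is missing is exactly what the paper does: it fixes an \emph{extremal} rainbow path forest $\mathcal P$ with exactly $\gamma n$ nonempty paths and $|E(\mathcal P)|$ maximum, supposes $|E(\mathcal P)|<(1-4\delta)n$, and builds an explicit nested chain of colour sets $C_0\subset C_1\subset\cdots\subset C_{\gamma n}$ indexed by the path endpoints $v_{1,1},\dots,v_{\gamma n,1}$. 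The Claim that $N_{C_{i-1}}(v_{i,1})\subseteq V(\mathcal P)\setminus\{v_{i+1,1},\dots,v_{\gamma n,1}\}$ is the switching argument: if it failed, a finite chain of ``add $v_{i_t,1}x_t$, delete $e(x_t)$'' moves would yield a rainbow path forest with one more edge, a contradiction. The growth of $|C_i|$ then gives $m_i\geq m_{i-1}+3\delta n-i$, hence $m_{\gamma n}\geq 3\gamma\delta n^2-(\gamma n)^2/2$, which exceeds $n$ under the hypothesis---that is where $3\gamma\delta-\gamma^2/2>n^{-1}$ actually enters. None of this is present in your sketch; the ``potential function'' is never defined and the inequality is never derived.

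Two specific points in your sketch are actually wrong, beyond being unproved. First, your explanation of the constant: you attribute the $4\delta$ loss to ``edges sacrificed to resolve colour repetitions during rotations.'' In the actual argument no edges are sacrificed---each rotation chain is chosen so that it \emph{gains} one net edge (it adds $s$ edges and removes $s-1$)---and the $4\delta$ arises purely from the counting $|N_{C_i}(v)| \geq (1-\delta)n - (|C|-|C_i|)\geq 3\delta n + m_i$, which uses $e(\mathcal P)\le (1-4\delta)n$. Second, your build-up framing (start from $n$ singletons, merge down to $\gamma n$ paths while preserving $e=n-k$) would, if it worked, give the stronger bound $|E(\mathcal P)|\geq (1-\gamma)n$; the fact that the lemma only claims $(1-4\delta)n$ is a signal that the greedy merge process cannot in general be driven all the way to $\gamma n$ paths, and that the right move is the extremal-plus-contradiction formulation rather than a greedy one. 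As written, the proposal is a plausible plan but not a proof: the central lemma (the switching claim) and the quantitative chain that turns $\delta,\gamma$ into a contradiction are both absent.
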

\begin{proof}
Let  $\mathcal P=\{P_1, \dots, P_{\gamma n}\}$ be a rainbow path
forest with $\leq \gamma n$ paths and $|E(\mathcal P)|$ as large as
possible. Suppose for the sake of contradiction that $|E(\mathcal P)|<
(1-4\delta)n$.  We claim that without loss of generality we may suppose
that all the paths $P_1, \dots, P_{\gamma n}$ are nonempty. Indeed
notice that we have $|V(\mathcal P)|\leq |E(\mathcal P)|+\gamma n<
(1-4\delta)n+ \gamma n\leq n-\gamma n$. Therefore if any of the paths
in $\mathcal P$ are empty, then we can replace them by single-vertex
paths outside $V(\mathcal P)$ to get a new path forest with the same
number of edges as $\mathcal P$.  For each $i$, let the path $P_i$ have
vertex sequence $v_{i,1}, v_{i,2}, \dots, v_{i,|P_i|}$. For a vertex
$v_{i,j}$ for $j>1$, let $e(v_{i,j})$ denote the edge $v_{i,j}v_{i,j-1}$
going from $v_{i,j}$ to its predecessor on $P_j$, and let $c(v_{i,j})$
denote the colour of $e(v_{i,j})$.

We define sets of colours $C_0, C_1, \dots, C_{\gamma n}$
recursively as follows. Let $C_0$ be the set of colours not on paths
in $\mathcal P$.  For $i=1, \dots, \gamma n$, let $$C_i=\left\{c(x):
x\in N_{C_{i-1}}(v_{i,1})\cap  V(\mathcal P)\setminus \{v_{1,1}, \dots,
v_{\gamma n,1}\}\right\}\cup C_{i-1}.$$ Notice that for any colour $c\in
C_i\setminus C_{i-1}$, there is an edge from $v_{i,1}$ to a vertex
$x\in V(\bigcup \mathcal P)$ with $c(x)=c.$

\begin{claim}\label{ClaimNCi}
$N_{C_{i-1}}(v_{i,1})\subseteq V(\mathcal P)\setminus 
\{v_{i+1,1}, \dots, v_{\gamma n,1}\}$ for $i=1\dots, \gamma n$.
\end{claim}
\begin{proof}
First we'll deal with the case when for $j>i$ there is an 
edge $v_{i,1}v_{j,1}$ by something in $C_{i-1}$. 
Define integers, $s$, $i_0, \dots, i_s$, colours $c_1, \dots, c_s$,
and vertices $x_0, \dots, x_{s-1}$ as follows.
\begin{enumerate}[(1)]
\item Let $i_0=i$ and $x_0=v_{j,1}$.
\item We will maintain that if $i_t\geq 1$, then the colour of 
$v_{i_{t},1}x_{t}$ is in $C_{(i_{t})-1}$. 
Notice that this does hold for $i_0$ and $x_0$.
\item For $t\geq 1$, let $c_t$ be the colour of $v_{i_{t-1},1}x_{t-1}$. 
By (2), we have $c_t\in C_{(i_{t-1})-1}$.
\item For $t\geq 1$, let $i_t$ be the smallest number for 
which $c_t\in C_{i_t}$. 
Notice that this ensures $c_t\in C_{i_t}\setminus C_{(i_t)-1}$
\item For $t\geq 1$, if $i_t>0$ then let $x_t$ be the vertex of
$V(\mathcal P)$ with $c(x_t)=c_t$. Such a vertex must exist since
from (4) we have $c_t\in C_{i_t}\setminus C_0$. Notice that by the
definition of $C_{i_t}$ and $c_t\in C_{i_t}\setminus C_{(i_t)-1}$,
the edge $v_{i_t,1}x_t$ must be present and coloured by something in
$C_{(i_t)-1}$ as required by (2).
\item We stop at the first number $s$ for which $i_s=0$.
\end{enumerate}
See Figure~\ref{FigurePathForest} for a concrete 
example of these integers, colours, and vertices being chosen.
Notice that from the choice of $c_t$ 
and $i_t$ in (3) and (4) we have $i_0>i_1>\dots>i_s$.
We also have $x_t\neq x_{t'}$ for $t\neq t'$. To see this notice that
from (4) and (5)  we have $c(x_t)=c_t\in C_{i_t}\setminus C_{(i_t)-1}$
and $c(x_{t'})=c_{t'}\in C_{i_{t'}}\setminus C_{(i_{t'})-1}$. Since
the sets $C_0, C_1, \dots$ are nested, the only way $c(x_t)=c(x_{t'})$
could occur is if $i_t=i_{t'}$ (which would imply $t=t'$.)  The following
claim will let us find a larger rainbow path forest than $\mathcal P$.

\begin{figure}
  \centering
    \includegraphics[width=0.7\textwidth]{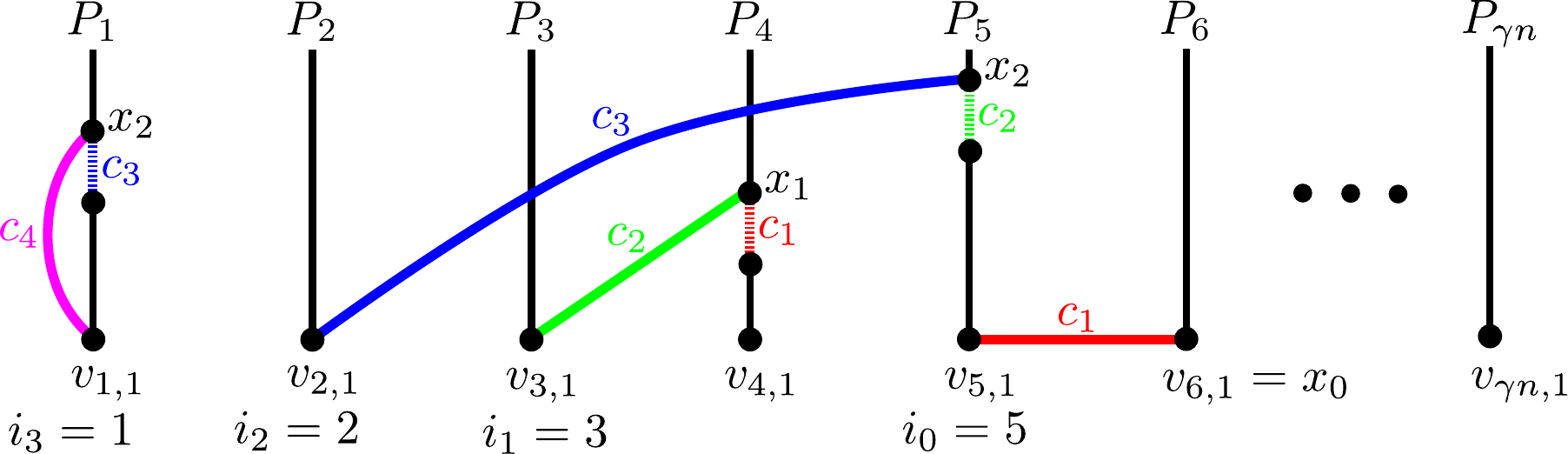}
  \caption{An example of the proof of Claim~\ref{ClaimNCi}. 
In this example $s=3$. The dashed coloured edges get deleted from the 
path forest and are replaced by the solid coloured ones. This gives a 
bigger path forest, contradicting the maximality of $\mathcal P$.}
\label{FigurePathForest}
\end{figure}
\begin{claim}
$\mathcal P'= P_1\cup\dots\cup P_{\gamma n}\cup 
\{(v_{i_0,1}x_0), (v_{i_1,1} x_1), (v_{i_2,1}x_2), 
\dots, (v_{i_{s-1},1}x_{s-1})\}\setminus\{e(x_1), e(x_2), 
\dots, e(x_{s-1})\}$ is a rainbow path forest.
\end{claim}
\begin{proof}
To see that $\mathcal P'$ is rainbow, notice that for $0\leq t<s-1$,
the edges $v_{i_t,1} x_t$ and $e(x_{t+1})$ both have the same colour,
namely $c_{t+1}$.  This shows that $\mathcal P'-v_{i_{s-1},1}x_{s-1}=
P_1\cup\dots\cup P_{\gamma n}\cup \{(v_{i_0,1}x_0),$ $(v_{i_1,1}
x_1),$ $(v_{i_2,1}x_2), \dots,$ $(v_{i_{s-2},1}x_{s-2})\}$ $\setminus$
$\{e(x_1),$ $e(x_2), \dots, e(x_{s-1})\}$ has exactly the same colours
that $\mathcal P$ had. By the definition of $s$, we have that the colour
$c_s$ of  $v_{i_{s-1},1}x_{s-1}$ is in $C_0$ and hence not in $\mathcal
P$,  proving that $\mathcal P'$ is rainbow.

To see that $\mathcal P'$ is a forest, notice that since $\mathcal P$
is a forest any cycle in $\mathcal P'$ must use an edge $v_{i_t,1} x_t$
for some $t$.  Let the vertex sequence of such a cycle be $v_{i_t,1}, x_t,
u_1, u_2, \dots, u_\ell, v_{i_t,1}$.  Since $x_t\in V(\bigcup \mathcal P)$
we have that $x_t=v_{k,j}$ for some $j$ and $k$.  Notice that since the
edge $e(x_t)=v_{k,j}v_{k,j-1}$ is absent in $\mathcal P'$, we have 
that $u_1=v_{k,j+1}$. Let $r$ be the smallest index for which $u_r\neq
v_{k,j+r}$. By the definition of $\mathcal P'$, we have that the edge
$u_{r-1}u_r$ must be of the form $v_{i_t',1}x_{t'}$ for some $t'\neq t$
with $u_{r-1}=x_{t'}$ and $u_{r}=v_{i_t',1}$. However, then the edge
$u_{r-2} u_{r-1}=e(x_{t'})$ would be absent, contradicting the fact
that $C$ is a cycle.

To see that $\mathcal P'$ is a path forest, notice that it has maximum
degree $2$---indeed the only vertices whose degrees increased are $x_0,
v_{i_0,1}, \dots, v_{i_{s-1},1}$. Their degrees increased from $1$ to
$2$ when going from $\mathcal P$ to $\mathcal P'$, which implies that
$\Delta(\mathcal P')\leq 2$ is maintained.
\end{proof}
Now  $\mathcal{P'}$ is a path forest with $\leq \gamma n$ paths and
one more edge than $\mathcal P$ had, contradicting the maximality of
$\mathcal P$.

The case when $v_{i,1}v$ is an edge for some $v\not\in V(\bigcup \mathcal
P)$ is identical using $x_0=v$.
\end{proof}
For $i=1, \dots, s$, let $m_i=|C_i|-|C_0|$. Let $C$ be the set of all 
the colours which occur in $G$. Notice that, by the definition 
of $C_0$, we have $|C|=|C_0|+e(\bigcup \mathcal P)$.
Notice that for any vertex $v$ we have 
\begin{equation}\label{EqNCiSizeBound}
|N_{C_i}(v)|\geq |N(v)|-\big(|C|-|C_i|\big)\geq (1-\delta) 
n -(|C_0|+e(\bigcup \mathcal P))+(|C_0|+m_i)\geq 3\delta n+m_i.
\end{equation}
The first inequality comes from the fact that $G$ is properly coloured
and there are $|C|-|C_i|$ colours which are not in $C_i$. The second
inequality comes from $\delta(G)\geq (1-\delta)n$ and the definitions
of $m_i$ and $C_0$. The third inequality comes from $e(\bigcup \mathcal
P)\leq (1-4\delta)n$.

From the definition of $C_i$, we have $|C_i|\geq
|C_0|+|N_{C_{i-1}}(v_{i,1})\cap \{v_{k,j}: k\geq 2,
j=1, \dots, \gamma n\}|$. From Claim~\ref{ClaimNCi}, we have
$|N_{C_{i-1}}(v_{i,1})\cap \{v_{k,j}: k\geq 2, j=1, \dots, \gamma n\}|\geq
|N_{C_{i-1}}(v_{i,1})|-i$. Combining these with (\ref{EqNCiSizeBound})
we get $|C_i|\geq |C_0|+ 3\delta n + m_{i-1}-i$ which implies $m_{i}\geq
m_{i-1}+3\delta n-i$ always holds. Iterating this gives $m_i\geq 3i\delta
n-\binom i 2$. Setting $i=\gamma n$, gives $n\geq m_{\gamma n}\geq
3\gamma  \delta n^2- (\gamma n) ^2/2$, which contradicts $3\gamma\delta
-\gamma^2/2>n^{-1}$.
\end{proof}

\section{Long rainbow cycle}
In this section we prove Theorem~\ref{TheoremRainbowCycle}, using the
following strategy. First we apply Theorem~\ref{TheoremRandom} to find a
good expander $H$ in $K_n$ whose maximum degree is small and whose edges
use only few colours . Then we apply Lemma~\ref{LemmaPathForest} to find
a nearly spanning path forest with few paths, which shares no colours
with $H$. Then we use the expander $H$ to ``rotate'' the path forest
in order to successively extend one of the paths in it until we have a
nearly spanning rainbow path $P$. Finally we again use the expander $H$
to close $P$ into a rainbow cycle.

We start with the lemma which shows how to use an expander to 
enlarge one of the paths in a path forest. 
\begin{lemma}\label{LemmaRotation}
For $b, m, r>0$ with $2mr\leq b$, the following holds.
Let $\mathcal P=\{P_1, \dots, P_{r}\}$ be a rainbow path 
forest in a properly coloured graph  $G$.
Let $H$ be a subgraph of $G$ sharing no colours with $\mathcal P$ 
with $\delta(H)\geq 3b$ and $E_H(A,B)\geq b+1$ for any two sets 
of vertices $A$ and $B$ of size $b$. 
Then either  $|P_1|\geq |V(\mathcal P)| - 2b$ or there are 
two edges $e_1, e_2\in H$ and a rainbow path 
forest $\mathcal P' = \{P'_1, \dots, P'_{r}\}$  
such that  $E(\mathcal P')\subseteq E(\mathcal P')+e_1+e_2$ 
and $|P'_1|\geq |P_1|+m$.
\end{lemma}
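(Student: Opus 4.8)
The plan is to enlarge $P_1$ by a rotation-extension argument that uses the expansion of $H$ to locate two useful edges. Write $P_1 = u_1 u_2 \dots u_\ell$ with endpoint $u := u_1$, and suppose $|P_1| < |V(\mathcal P)| - 2b$; I want to produce $e_1, e_2 \in H$ and a new rainbow path forest $\mathcal P'$ with $|P_1'| \ge |P_1| + m$. The rough idea is: use $H$-edges from $u$ to push $P_1$ into a different path $P_j$ of the forest (or into unused vertices), absorbing a long stretch of it; if that stretch is already long we are done, and otherwise we perform a local rotation inside $P_j$ first so that the stretch we graft on has length at least $m$. Since $H$ shares no colours with $\mathcal P$, adding an $H$-edge to a path of $\mathcal P$ keeps it rainbow provided we only add such edges and delete old ones.

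Here are the steps. First, let $N$ be the set of the first $b$ vertices of $P_1$ (i.e. $u_1, \dots, u_b$); this is well-defined since $|P_1| > 2b$. Second, consider the $r \le b/(2m)$ paths of $\mathcal P$ other than $P_1$ together with $V(G)\setminus V(\mathcal P)$; because $|P_1| < |V(\mathcal P)| - 2b$ there is a set $W$ of $\ge 2b$ vertices lying outside $P_1$, and I partition a size-$b$ subset of it appropriately. The key point is a counting one: there are $\le r$ paths $P_j$ ($j\ge 2$), so if $|P_j| \le 2m$ for all of them then those paths contain at most $2mr \le b$ vertices in total; hence among the $\ge 2b$ vertices outside $P_1$ there are at least $b$ that lie either in the interior of some path $P_j$ with $|P_j| > 2m$ or are vertices we can treat as length-$\ge m$ stretches after a rotation. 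Let $B$ be such a set of $b$ vertices, each carrying a ``tail'' of length $\ge m$ along its path that avoids the other endpoints we will touch. Third, apply the expansion hypothesis $E_H(N,B)\ge b+1$ to get an edge $e_1 = u_i x \in H$ with $u_i \in N$, $x \in B$: delete the $P_1$-edge $u_i u_{i-1}$ from $\mathcal P$, which frees $u$'s component into a path with new endpoint $u_i$... — actually it is cleaner to instead keep $u_1$ fixed and re-root: delete $u_iu_{i+1}$ so that the initial segment $u_1\dots u_i$ together with $e_1$ and the length-$\ge m$ tail of $x$ forms a path $P_1'$ with $|P_1'| \ge i + m \ge |P_1| - b + m$. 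To actually gain, I take $N$ to be a set of $b$ vertices near the \emph{far} end of $P_1$ so that $i \ge |P_1| - b$, giving $|P_1'| \ge |P_1| + m$ once we also ensure the grafted tail has the right length; this uses a second $H$-edge $e_2$ for the rotation inside $P_j$ needed to expose a tail of length exactly $\ge m$ when $x$ sits in the middle of $P_j$, handled again via $E_H(\cdot,\cdot)\ge b+1$ and $\delta(H)\ge 3b$ to avoid the $O(b)$ forbidden vertices. Finally, check that $\mathcal P'$ has $r$ paths (we never split a path without reconnecting it, and $E(\mathcal P') \subseteq E(\mathcal P) + e_1 + e_2$), is rainbow (we only added $e_1, e_2 \in H$, whose colours are absent from $\mathcal P$ and distinct from each other since $\delta(H)\ge 3b$ lets us pick $e_2$ of a new colour), and has $|P_1'| \ge |P_1|+m$.

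The main obstacle is the bookkeeping of which vertices are ``forbidden'' when applying the expansion: the endpoints of all $r$ paths, the $O(m)$ vertices in each tail we reserve, and the vertices already committed to $P_1'$. There are $O(b)$ such vertices in total, so the bounds $\delta(H) \ge 3b$ and $E_H(A,B)\ge b+1$ for $|A|=|B|=b$ are exactly strong enough to dodge them, but making the constants line up — choosing $N$ near the right end of $P_1$, guaranteeing a tail of length $\ge m$ rather than just $\ge 1$, and verifying the forest-and-rainbow conditions simultaneously — is where the care is needed. The condition $2mr \le b$ is precisely what makes the "short paths contribute $\le b$ vertices" count work, which is why it appears in the hypothesis.
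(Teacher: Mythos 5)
Your proposal correctly identifies the broad shape of the argument (rotation-extension via the expansion of $H$, with $T$ being the set of vertices on long paths $P_j$, $j\ge 2$, which by $2mr\le b$ has size $\ge b$), but the central construction has a genuine gap. You find an edge $e_1=u_ix\in H$ with $u_i\in N$, an \emph{interior} vertex of $P_1$, delete $u_iu_{i+1}$, and graft $u_1\dots u_i$ onto the tail of $x$. This severs $P_1$ into two pieces and leaves $u_{i+1}\dots u_\ell$ as an orphaned path, so $\mathcal P'$ has $r+1$ paths rather than $r$ --- the lemma explicitly requires $\mathcal P'$ to still consist of $r$ paths. Moreover even granting that, your length estimate only gives $|P_1'|\ge i+m\ge |P_1|-b+m$, which is strictly less than the required $|P_1|+m$ whenever $b>0$ (and in the application $b=n^{3/4}\gg m=\tfrac12 n^{1/2}$, so choosing $N$ near the far end does not close the gap).

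The missing idea is the P\'osa rotation carried out entirely \emph{within} $P_1$. In the paper one always attaches the first new edge at the endpoint $v_1$. In the easy cases ($v_1$ has an $H$-edge directly to $T$, or to a vertex outside $V(\mathcal P)$ with an $H$-edge to $T$), $P_1$ is extended without being cut. In the hard case, when $|N_H(v_1)\cap P_1|\ge b$, one sets $S^+=\{v_{j-1}: v_j\in N_H(v_1)\cap P_1\}$ and applies the expansion hypothesis to $(S^+,T)$, not to a tail segment of $P_1$. The $b+1$ edges guarantee some $v_\ell\in S^+$ with at least two $H$-neighbours in $T$, which is exactly what is needed to choose $x\in T$ so that $e_2=v_\ell x$ has a colour different from $e_1=v_1v_{\ell+1}$ (this colour-disjointness between $e_1$ and $e_2$ is a genuine constraint that your sketch glosses over: it uses two candidates for $x$, not just one, together with $H$ being properly coloured). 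One then takes $e_1=v_1v_{\ell+1}$, deletes $v_\ell v_{\ell+1}$, and the resulting path $(v_k,\dots,v_{\ell+1},v_1,\dots,v_\ell)+e_2+P^+$ contains \emph{all} of $P_1$'s vertices plus the tail $P^+$, so $|P_1'|=|P_1|+|P^+|\ge |P_1|+m$, and the count of paths is preserved because $P_j$ is merely split into a grafted half and a leftover $P^-$. Your description of $e_2$ as ``the rotation inside $P_j$'' conflates this with the split of $P_j$, which actually needs no extra edge: once $|P_j|\ge 2m$, one of the two sides of $x$ on $P_j$ automatically has length $\ge m$.
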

\begin{proof}
Suppose that  $|P_1|< |V(\mathcal P)| - 2b$.  Let $P_1=v_1, v_2, \dots,
v_{k}$ and let $T$ be the union of vertices on those paths among $P_2,
\dots, P_{r}$ which have length  at least $2m$.  Notice that there are
at most $2mr$ vertices on paths $P_i$ of length $\leq 2m$. Since $|P_1|<
|V(\mathcal P)| - 2b$,  the set $T$ has size at least $2b-2mr\geq b$.

First suppose that there is an edge of $H$ from $v_1$ to a vertex $x
\in T$ on some path $P_i$ of length at least $2m$. We can partition
$P_i$ into two subpaths $P^+$ and $P^-$ such that $P^+$ starts with $x$
and has $|P^+|\geq |P_i|/2\geq m$.  Then we can take $e_1=e_2={v_1}x$,
$P_1=P_1+e_1+P^+$, $P_i=P^-$ and $P_j'=P_j$ for all other $j$ to obtain
paths satisfying the assertion of the lemma.

Next suppose that $|N_H({v_1})\cap P_1|\geq b.$ Let $S\subseteq
N_H({v_1})\cap P_1$  be a subset of size $b$. Let $S^+$ be the set
of predecessors on $P_1$ of vertices in $S$, i.e., $S^+=\{v_{i-1}:
v_i\in S\}$. Since $|S^+|, |T|\geq b$, there are at least $b+1$ edges
between $S^+$ and $T$ in $H$. In particular this means that there is
some $v_\ell\in S^+$ which has $|N_H(v_\ell)\cap T|\geq 2$. Since
$H$ is properly coloured, there is some $x\in N_H(v_\ell)\cap T$
such that $v_\ell x$ has a different colour then that 
of $v_1 v_{\ell+1}$. By
the definition of $T$, this $x$ belongs to path $P_i, i\geq 2$ with
$|P_i|\geq 2m$. Again we can partition $P_i$ into two subpaths $P^+$
and $P^-$ such that $P^+$ starts with $x$ and has $|P^+|\geq m$.  Then,
taking $e_1={v_1}v_{\ell+1}$ $e_2=v_\ell x$, $P_1=(v_k,v_{k-1},\dots,
v_{\ell+1} v_1,v_2, \dots, v_{\ell})+e_2+P^+$, $P_i'=P^-$ and $P_j'=P_j$
for all other $j$, we obtain paths satisfying the assertion of the lemma.

Finally we have that $|N_H({v_1})\cap P_1|< b$ and there are no edges
from $v_1$ to $T$. Note that there are also at most $2mr \leq b$
edges from $v_1$ to vertices on paths $P_i$ of length $\leq 2m$. Since
$|N_H(v_1)|\geq 3b$, there is a set $S\subseteq N_H(v_1)\setminus
V(\mathcal P)$ with $|S|=b$. Since $|S|, |T|\geq b$, there is an edge
$sx$ in $H$ from some $s\in S$ to some $x\in T$. Since $H$ is properly
coloured, $sx$ has a different colour then that of $v_1 s$.  
By the definition
of $T$, the vertex $x$ is on some path $P_i, i\geq 2$ of length at least
$2m$.  Partition $P_i$ into two subpaths $P^+$ and $P^-$ such that $P^+$
starts with $x$ and has $|P^+|\geq m$.  Let $e_1=v_1s$ $e_2=sx$. Then
$P_1'=P_1+e_1+e_2+P^+$, $P_i'=P^-$ and $P_j'=P_j$ for all other $j$
satisfy the assertion of the lemma, completing the proof
\end{proof}

Having finished all the necessary preparations we are now ready to
show that every properly edge-coloured $K_n$ has a nearly-spanning
rainbow cycle.

\begin{proof}[Proof of Theorem~\ref{TheoremRainbowCycle}]
Given a properly edge-coloured complete graph $K_n$, we first use
Theorem~\ref{TheoremRandom} to construct its subgraph $H$, satisfying
the conditions of Lemma  \ref{LemmaRotation}.  Let  $b = n^{3/4}$. Let $H$
be a subgraph obtained by choosing every colour class randomly and
independently with probability  $p= 4.5b/n$.  Since $p= 4.5 n^{-1/4}$
we have that  $b=n^{3/4} \gg n^{1/2} \log^2 n>(\log n/p)^2$. Therefore,
we can apply Theorem~\ref{TheoremRandom}  to get that almost surely
every vertex in $H$ has degree $4b \leq d_{H}(v) =(1-o(1))np\leq 5b-1$
and $e_{H}(A,B)\geq (1-o(1))pb^2>4.3{n}^{1/2}b$ for any two 
disjoint sets $A$ and $B$ of size $b$. Fix such an $H$.

Let $G=K_n\setminus H$ be the subgraph of $K_n$ consisting of all
edges whose colours are not in $E(H)$. We have $\delta(G)\geq
n-1-\Delta(H)\geq (1-5n^{-1/4})n$.  Applying Lemma~\ref{LemmaPathForest}
with $\delta=5n^{-1/4}$ and $\gamma=n^{-3/4}$ we get a rainbow path
forest $\mathcal P$ with $n^{1/4}$ paths and $|E(\mathcal P)|\geq
n-20n^{3/4}$. Moreover the colours of edges in $\cal P$ and $H$ are
disjoint.

Next, we repeatedly apply Lemma~\ref{LemmaRotation} $2n^{1/2}$ times
with $b=n^{3/4}$, $r=n^{1/4}$, and $m=0.5n^{1/2}$. At each iteration we
delete from $H$ all edges sharing a colour with $e_1$ or $e_2$ to get a
subgraph $H'$.  Notice that after $i$ iterations, $H$ has lost at most
$2i$ colours, and so $\delta(H')\geq \delta(H)-2i\geq 4b-2i > 3b$ and for
any $A, B\subseteq V(H)$ with $|A|, |B|\geq b$  we have $e_{H'}(A,B)\geq
4.3{n}^{1/2}b-2ib \geq 0.3{n}^{1/2}b>b+1$.  This shows that we indeed
can continue the process for $2n^{1/2}$ steps without violating the
conditions of Lemma~\ref{LemmaRotation}.  At each iteration we either
increase the length of $P_1$ by $m$, or we establish that $|P_1|\geq
|V(\mathcal P)|-2b$.  Since $2n^{1/2}m=n>n-2b$ we have that the second
option must occur at some point during the $2n^{1/2}$ iterations of
Lemma~\ref{LemmaRotation}.  This gives a rainbow path $P$ of length at
least $|V(\mathcal P)|-2b\geq n-22n^{3/4}$. As was mentioned above there
must still be at least $0.3n^{1/2}b$ edges of $H'$ left between any two
disjoint sets $A$, $B$ of size $b$.  Let $S$ be the set of first $b$
vertices and $T$ be the set of last $b$ vertices of the rainbow path
$P$. Then there is an edge of $H'$ between $S$ and $T$  whose colour is
not on $P$. Adding this edge we get a rainbow cycle of length at least
$|P|-2b \geq n-24n^{3/4}$, completing the proof.  \end{proof}

\section{Concluding remarks}

Versions of Theorem~\ref{TheoremRandom} can be proved in settings other
than properly coloured complete graphs. One particularly interesting
variation is to look at properly coloured balanced complete bipartite
graphs $K_{n,n}$.
\begin{theorem}\label{BipartiteTheorem}
Given a proper edge-colouring of $K_{n,n}$ with bipartition classes
$X$ and $Y$, let $G$ be a subgraph obtained by choosing every colour
class randomly and independently with probability $p$. Then, with high
probability, all vertices in $G$ have degree $(1-o(1))np$ and for every
two subsets $A\subseteq X, B\subseteq Y$ of size $x \gg (\log n/p)^2$,
$e_{G}(A,B) \geq (1-o(1))p x^2$.
\end{theorem}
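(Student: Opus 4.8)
The plan is to follow the proof of Theorem~\ref{TheoremRandom} almost verbatim, checking that each ingredient survives the passage from $K_n$ to $K_{n,n}$. The degree statement is immediate: since the colouring is proper, every vertex $v$ sees $n$ edges of $n$ distinct colours, so $d_G(v)$ is binomial with parameters $(n,p)$; as $x \gg (\log n/p)^2$ forces $np \gg \log n$, the Chernoff bound (Lemma~\ref{Chernoff}) together with a union bound over the $2n$ vertices gives $d_G(v) = (1-o(1))np$ almost surely. For the edge-expansion statement it suffices to treat pairs $A \subseteq X$, $B \subseteq Y$ with $|A| = |B| = x$ (which is in fact all the theorem asks for; for larger sets one could additionally pass to $(1-o(1))|A||B|/x^2$ edge-disjoint sub-pairs of this size, exactly as in the non-bipartite case).

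Next I would introduce the bipartite analogue of a \emph{nearly-rainbow} pair --- a pair $(A,B)$ with $A \subseteq X$, $B \subseteq Y$ spanning at least $(1-o(1))|A||B|$ colours --- and reprove the two structural lemmas. The analogue of Lemma~\ref{LemmaNearRainbowExpansion} is unchanged: for a nearly-rainbow pair $(A,B)$ of sets of size $y \gg \log n/p$, the number of colours surviving in $G$ between $A$ and $B$ is binomial with parameters $(m,p)$, $m \ge (1-o(1))y^2$, and since $e_G(A,B)$ is at least this number of colours, Chernoff plus a union bound over all $\binom{n}{y}^2$ such pairs and all $y \le n$ yields $e_G(A,B) \ge (1-o(1))py^2$ almost surely. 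The analogue of Lemma~\ref{LemmaManyNearRainbowPairs} also goes through: the only place the bipartite structure enters is the bound $|E_c| \le \min(|A|,|B|) \le x$ on the size of a colour class between two sets of size $x$, which holds because a proper colour class of $K_{n,n}$ is a matching --- precisely the estimate used in the original inclusion--exclusion computation. The probabilities $\mathbb{P}(ab \in E(S,T)) = y^2/x^2$ and $\mathbb{P}(ab, a'b' \in E(S,T)) = y^2(y-1)^2/(x^2(x-1)^2)$ are unaffected since $S \subseteq A$, $T \subseteq B$ are chosen uniformly and independently, and $\sum_c |E_c| = e_{K_{n,n}}(A,B) = x^2$, so the same expectation computation and Markov's inequality produce partitions $\{A_i\}$ of $A$ and $\{B_j\}$ of $B$ into parts of size $y$ with all but an $o(1)$ fraction of the pairs $(A_i, B_j)$ nearly-rainbow.

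Finally, fixing $y$ with $y \gg \log n/p$ and $y^2 \ll x$ (possible because $x \gg (\log n/p)^2$), taking arbitrary $A \subseteq X$, $B \subseteq Y$ of size $x$, partitioning them as above, and using the bipartite Lemma~\ref{LemmaNearRainbowExpansion} on each nearly-rainbow pair gives
$$e_G(A,B) \ge \sum_{\text{nearly-rainbow } A_i, B_j} e_G(A_i,B_j) \ge (1-o(1)) \frac{x^2}{y^2} \cdot (1-o(1)) p y^2 \ge (1-o(1)) p x^2,$$
which is the claim.

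I expect no real obstacle here: the whole argument is a transcription of the $K_n$ proof, and the single substantive point to verify is that colour classes of a proper edge-colouring of $K_{n,n}$ are matchings, so that $|E_c| \le x$ between two sets of size $x$ --- the same fact that drives the original proof. The parameter bookkeeping (the chain $y \gg \log n/p$, $y^2 \ll x$, $x \gg (\log n/p)^2$, and the union bound over $\binom{n}{y}^2 \cdot n$ pairs) is identical, and if anything the statement is marginally easier than Theorem~\ref{TheoremRandom} since $A$ and $B$ are automatically disjoint.
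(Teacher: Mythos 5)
Your proposal is correct and matches the paper's intent exactly: the paper itself does not write out a proof of Theorem~\ref{BipartiteTheorem}, stating only that it ``is proved by essentially the same argument as Theorem~\ref{TheoremRandom},'' and your transcription --- including the key observation that colour classes are matchings, so $|E_c|\le x$ and $\sum_c|E_c|=x^2$, and that $A\subseteq X$, $B\subseteq Y$ are automatically disjoint --- is precisely that argument.
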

The above theorem is proved by essentially the same argument as
Theorem~\ref{TheoremRandom}. Properly coloured balanced complete
bipartite graphs are interesting because they generalize Latin
squares. Indeed given any $n\times n$ Latin square, one can associate
a proper colouring of $K_{n,n}$ with $V(K_{n,n})=\{x_1, \dots, x_n,
y_1, \dots, y_n\}$ to it by placing a colour $i$ edge between $x_j$
and $y_k$ whenever the $(j,k)$th entry in the Latin square is $i$. Thus
Theorem~\ref{BipartiteTheorem} implies that every Latin square has  a
small set of symbols which exhibits a random-like behavior.

It would be interesting to find the correct value of the second order term
in Theorem~\ref{TheoremRainbowCycle}. So far, the best lower bound on
this is ``$-1$'' which comes from Maamoun and Meyniel's construction
in~\cite{Maamoun_Meyniel}. It is quite possible that their construction is
tight and ``$-1$'' should be the correct value. However this would likely
be very hard to prove since, at present, we do not even know how to get
a rainbow maximum degree $2$ subgraph of a properly coloured $K_n$ with
$n-o(\sqrt n)$ edges (a subgraph with $n-O(\sqrt n)$ edges can be obtained
by Lemma~\ref{LemmaPathForest}, or by a result from~\cite{Andersen}.)

Finally it would be interesting to know what is the smallest size of
the sets for which Theorem~\ref{TheoremRandom} holds. In particular, is
it true that for all $|A|, |B|\gg (\log n /p )^{1+\varepsilon}$ we have
$e_G(A,B)\geq (1-o(1))p|A||B|$, where $G$ is the graph in Theorem 1.3?
It can be shown that this is not the case if $p=1/2$ and $|A|=|B|=c \log
n \log \log n$ for an appropriate absolute constant  $c>0$ (see~\cite{Green}.)


\begin{thebibliography}{99}
\bibitem{Akbari_Etesami_Mahini_Mahmoody}
S. Akbari, O. Etesami, H. Mahini, and  M. Mahmoody. 
On rainbow cycles in edge coloured complete graphs.
{\em Australas. J. Combin.} {\bf 37} (2007), 33--42.

\bibitem{AO}
N. Alon and A. Orlitsky, Repeated communication and Ramsey graphs,
IEEE Transactions on Information Theory 41 (1995), 1276-1289.
\bibitem{AKKR}
N. Alon, M. Krivelevich, T. Kaufman and D. Ron,
Testing triangle-freeness in general graphs,
Proc. of the Seventeenth Annual ACM-SIAM SODA (2006), 279-288.
Also: SIAM J. Discrete Math. 22 (2008), 786--819.

\bibitem{AS}
N.\ Alon and J. H. Spencer, {\bf The Probabilistic Method}, 4th ed., Wiley,
New York, 2016.

\bibitem{Andersen}
L. Andersen. Hamilton circuits with many colours in properly 
edge-coloured complete graphs.
{\em Math. Scand.} {\bf 64} (1989), 5--14.

\bibitem{Chen_Li}
H. Chen and X. Li. Long rainbow path in properly 
edge-coloured complete graphs.
{arXiv:1503.04516} (2015).

\bibitem{Gebauer_Mousset}
H. Gebauer and F. Mousset. On Rainbow Cycles and Paths.
{\em arXiv:1207.0840}  (2012).

\bibitem{Green}
B. Green. Counting sets with small sumset, and the clique number of random Cayley graphs.
{\em Combinatorica} {\bf 25} (2005), 307--326.

\bibitem{Gyarfas_Mhalla}
A. Gy\'arf\'as  and M. Mhalla. Rainbow and 
orthogonal paths in factorizations of $K_n$.
{\em J. Combin. Des.} {\bf 18} (2010), 167--176.

\bibitem{Gyarfas_Ruszinko_Sarkozy_Schelp}
A. Gy\'arf\'as, M. Ruszink\'o, G. S\'ark\"ozy, and R. Schelp. 
Long rainbow cycles in proper edge-colourings of complete graphs.
{\em Australas. J. Combin.} {\bf 50} (2011), 45--53.

\bibitem{Hahn}
G. Hahn. Un jeu de colouration.
{\em Actes du Colloque de Cerisy} {\bf 12} (1980), 18--18.

\bibitem{MR}
E. Mendelsohn and A. Rosa, 
One-factorizations of the complete graph - a survey, 
{\em J. Graph Theory} {\bf 9} (1985), 43--65.

\bibitem{Hahn_Thomassen}
G. Hahn and C. Thomassen, Path and cycle sub-ramsey numbers 
and an edge-colouring conjecture.
{\em Discrete Math.} {\bf 62} (1986), 29--33.

\bibitem{Maamoun_Meyniel}
M. Maamoun and H. Meyniel. On a problem of {G}. {H}ahn about 
coloured  {H}amiltonian  paths  in $K_{2^t}$.
{\em Discrete Math.} {\bf 51} (1984), 213--214.

\bibitem{McD}
C. McDiarmid. ``Concentration'', in: {\bf Probabilistic methods for algorithmic discrete mathematics}.
 Springer Berlin Heidelberg. (1998), 195--248.

\bibitem{SV}
B. Sudakov and J. Volec, 
Properly colored and rainbow copies of graphs with few cherries, 
{\em J. Combinatorial Theory Ser. B}, to appear.

\bibitem{Wal}
W. Wallis, One-factorization of complete graphs, 
in: {\bf Contemporary design theory}, Willey, New York, 1992, 593--631.

\end{thebibliography}
\end{document}